\newtheorem{thm}{Theorem}[section]
\newtheorem{lem}[thm]{Lemma}
\newtheorem{prop}[thm]{Proposition}
\newtheorem{prob}[thm]{Problem}
\newtheorem{mainthm}[thm]{Main Theorem}
\theoremstyle{definition}
\newtheorem{defin}[thm]{Definition}
\newtheorem*{xrem}{Remark}
\numberwithin{equation}{section}
\newcommand{\length}{\operatorname{length}}
\begin{document}

%%%%% To ease editing, for IMPAN journals add:

\baselineskip=17pt

%%%%%%%%%%%%%%%%

\title{Some remarks on Kuratowski partitions}

\author{Joanna Jureczko and Bogdan W\c{e}glorz\\
Institute of Mathematics\\ 
%Faculty of Mathematics and Natural Sciences\\
%College of Science\\
Cardinal Stefan Wyszy\'nski University in Warsaw\\
E-mail: j.jureczko@uksw.edu.pl\\
E-mail: b.weglorz@uksw.edu.pl}

\date{}

\maketitle

%% Classification and key words; note that the 2010 classification is used:

\renewcommand{\thefootnote}{}

\footnote{2010 \emph{Mathematics Subject Classification}: Primary 03E05; Secondary 54E52.}

\footnote{\emph{Key words and phrases}: Baire property, Kuratowski partition, precipitous ideal, Fr\'echet ideal.}

\renewcommand{\thefootnote}{\arabic{footnote}}
\setcounter{footnote}{0}

%%%%%%%%

\begin{abstract}
We introduce the notion of $K$-ideals associated with Kuratowski partitions and we prove that each $\kappa$-complete ideal on a measurable cardinal $\kappa$ can be represented as a $K$-ideal. Moreover, we show some results concerning precipitous and Fr\'echet ideals.
\end{abstract}

\section{Introduction}

The concept emerged when attempting to solve the problem set by 
K. Kuratowski in \cite{KK} whether each function $f \colon X\to~Y$, from a completely metrizable space $X$ to a metrizable space $Y$, such that for each open $V \subset X$ the set $f^{-1}(V)$ has the Baire property, (i.e. it differs from an open set by a meager set) is continuous apart from a meager set

As shown by A. Emeryk, R. Frankiewicz and W. Kulpa in  \cite{EFK}  this problem is equivalent to the problem of the nonexistence of partitions of a metrizable space into meager sets with the property that each its subfamily has the Baire property. Such a partition is called a \textit{Kuratowski partition}, (see Section 2 for a formal definition). 
In \cite{EFK} the authors did not used the name "Kuratowski partition", but they iisted its defining properties.
It seems that for the first time this name was used in \cite{FS}. 

With any Kuratowski partition of a topological space it is associated an ideal called in this paper \textit{$K$-ideal}, (see Section 2 for the formal definition). It seemed that knowledge of such $K$-ideal will determine if it is a Kuratowski partition. Unfortunately it is not so because, as we will show, the structure of such an ideal can be almost arbitrary. For "decoding" a Kuratowski partition from a given $K$-ideal we need also full information about the space in which we consider such an ideal.  

In 1987 R. Frankiewicz and K. Kunen in \cite{FK} showed, by  forcing methods, that the existence of Kuratowski partitions is equiconsistent with the existence of precipitous ideals. 

In this paper we show that with some assumptions    
the completion of some metric Baire space with a Kuratowski partition does not have a Kuratowski partition, (Proposition 3.1). Moreover using only combinatorial methods we show that a $K$-ideal associated with a Kuratowski partition of some space need not be precipitous, (Theorem 3.3) and each $\kappa$-complete ideal a measurable cardinal $\kappa$ can be represented by some $K$-ideal, (Theorem 3.4). 

\section{Definitions and basic facts}
 
Let $X$ be a topological space and let $\kappa$ be a regular cardinal. Let $\mathcal{F}$ be a partition of $X$ into meager sets. We say that $\mathcal{F}$ is \textit{a Kuratowski partition} if $\bigcup \mathcal{F}'$ has the Baire property for each $\mathcal{F}' \subseteq \mathcal{F}$.
\\
We can and will assume that $\mathcal{F}$ is indexed by $\kappa$, i.e
$$\mathcal{F} = \{F_\alpha: \alpha < \kappa\}.$$
With $\mathcal{F}$ we associate an ideal 
$$I_\mathcal{F} = \{A \subset \kappa \colon \bigcup_{\alpha \in A} F_\alpha \textrm{ is meager}\}$$
which we call \textit{a $K$-ideal}.

As was defined above, a Kuratowski partition $\mathcal{F}$ of a topological space $X$ is indexed by ordinals, but a $K$-ideal associated with $\mathcal{F}$ is a $\kappa$-complete ideal on cardinals. Thus in some results of this paper the considerations are carried out on cardinals instead of topological spaces.

Let $S$ be a measurable space with a positive measure and let $I$ be the ideal of all sets of measure zero. An \textit{$I$-partition} of $S$ is a maximal family $W$ of subsets of $S$ of a positive measure such that $A \cap B \in I$ for all distinct $A, B \in W$.
An $I$-partition $W_1$ of $S$ is a \textit{refinement} of an $I$-partition $W_2$ of $S$, $W_1 \leq W_2$, if each $A \in W_1$ is a subset of some $B\in W_2$.
  
Let $I$ be a $\kappa$-complete ideal on $\kappa$ containing singletons. 
The ideal $I$ is \textit{precipitous} if whenever $S$ is a set of a positive measure and $\{W_n \colon n < \omega\}$ are $I-partitions$ of $S$ such that 
$W_0 \geq W_1\geq ... \geq W_n \geq ...$
then there exists a sequence of sets
$A_0 \supseteq A_1\supseteq ... \supseteq A_n \supseteq ...$
such that $A_n \in W_n$ for each $n$, and $\bigcap_{n=0}^{\infty} A_n$ is nonempty. (see also \cite[p. 438-439]{TJ}).

A \textit{Fr\'echet ideal} is an ideal of the form
$I =\{A \subset \kappa \colon |A| < \kappa\}$, for some cardinal $\kappa$.

By \cite[Lemma 35.9, p. 440]{TJ} we know that

\begin{lem}[\cite{TJ}] 
	Let $\kappa$ be a regular uncountable cardinal. The Fr\'echet ideal is not precipitous.
\end{lem}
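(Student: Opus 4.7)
My plan is to derive the lemma from the standard characterization that an ideal $I$ is precipitous if and only if, for every $V$-generic filter $G$ on the Boolean algebra $P(\kappa)/I$, the generic ultrapower $V^\kappa/G$ is well-founded. Granting this equivalence, it suffices to produce functions $f_n\colon \kappa\to\kappa$ ($n<\omega$) whose equivalence classes form an infinite descending chain in some generic ultrapower by the Fr\'echet ideal; this directly contradicts well-foundedness and hence precipitousness.

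For the concrete construction I would take $f\colon\kappa\to\kappa$ to be the ``predecessor'' map, $f(\beta+1)=\beta$ for successor ordinals and $f(\alpha)=0$ for $\alpha$ zero or a limit, and set $f_n=f^n$. Writing $\alpha<\kappa$ uniquely as $\lambda+k$ with $\lambda$ zero or a limit and $k<\omega$, a direct computation shows $f_n(\alpha)>0$ iff $k>n$, so the set $A_n=\{\alpha:f_n(\alpha)>0\}$ has cardinality $\kappa$ (because $\kappa$ is regular uncountable and limit ordinals are cofinal in $\kappa$), and the inclusions $A_0\supseteq A_1\supseteq\cdots$ hold. The family $\{[A_n]:n<\omega\}\subseteq P(\kappa)/I$ has the finite intersection property---any finite meet is simply $[A_N]\neq 0$---so it extends to a generic ultrafilter $G$ containing every $[A_n]$. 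For such a $G$ one verifies $\{\alpha:f_n(\alpha)>f_{n+1}(\alpha)\}=A_n\in G$, yielding $[f_n]>[f_{n+1}]$ in $V^\kappa/G$ for every $n$, and thus the desired ill-founded descending chain.

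The main obstacle is the bridge between the $I$-partition formulation of precipitousness stated in the excerpt and the generic-ultrapower reformulation used above; this equivalence is non-trivial but entirely standard, and is exactly the content of the material in \cite{TJ} surrounding the lemma being cited. Once it is in hand, the predecessor-iteration trick runs as described. A purely combinatorial alternative would instead convert each pair $(f_n,A_n)$ into an $I$-partition $W_n$ built from the level sets of $f_n$ restricted to $A_n$ (extended to a maximal antichain), with $W_n\geq W_{n+1}$ coming from the inclusion $A_{n+1}\subseteq A_n$; emptiness of every branch intersection would then follow because any $\alpha$ in $\bigcap_n A_{s(n)}$ would determine an infinite strictly descending sequence of ordinals $f_n(\alpha)$, which is impossible.
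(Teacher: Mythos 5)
The paper itself offers no argument here --- the lemma is quoted from Jech \cite{TJ} --- so your proposal has to stand on its own, and it does not: the decisive step fails. You claim that since $\{[A_n]:n<\omega\}$ is centred in $\mathcal{P}(\kappa)/I$, it ``extends to a generic ultrafilter $G$ containing every $[A_n]$.'' Centredness gives an ultrafilter in $V$, but not a generic one, and in this situation no generic filter can contain all the $[A_n]$: since $\bigcap_n A_n=\emptyset$ and the Fr\'echet ideal on a regular uncountable $\kappa$ is $\sigma$-complete, every $I$-positive $C$ satisfies $C=\bigcup_n(C\setminus A_n)$, so some $C\setminus A_n$ is $I$-positive; hence the conditions incompatible with some $[A_n]$ form a dense set, and every generic $G$ omits some $[A_n]$. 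As $\{\alpha: f_{n+1}(\alpha)<f_n(\alpha)\}=A_n$, your functions therefore yield a descending chain in $V^\kappa/G$ for no generic $G$ at all. The obstruction is not an accident of the predecessor map: for any $\sigma$-complete ideal, ill-foundedness of the generic ultrapower can never be witnessed by a sequence of functions fixed in advance in $V$ with the demand that all the difference sets lie in $G$ (the same density argument applies verbatim); the descending sequence of classes must be produced inside $V[G]$ using genericity, which is exactly what the partition (or functional) machinery in Jech's proof is for.

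Your ``purely combinatorial alternative'' inherits the same problem in a different guise. The level sets of the iterated predecessor map are singletons (for $\beta>0$, $f_n(\alpha)=\beta$ with $\alpha\in A_n$ determines $\alpha$), hence belong to $I$ and cannot be members of an $I$-partition, whose pieces must be $I$-positive; and once you ``extend to a maximal antichain'' you introduce pieces over which you have no control, so a descending choice sequence may run entirely through those added pieces, and the argument that a point of the intersection would produce an infinite strictly descending sequence of ordinals no longer applies to such a branch. (Also, $W_{n+1}\leq W_n$ requires every piece of $W_{n+1}$ to be contained in a single piece of $W_n$, which does not follow from $A_{n+1}\subseteq A_n$ alone.) Constructing a genuinely decreasing sequence of maximal $I$-partitions of $\kappa$ with $\kappa$-sized pieces, all of whose descending selections have empty intersection, is precisely the nontrivial content of the lemma, and it is what the cited proof in \cite{TJ} supplies; invoking the generic-ultrapower characterization is legitimate as a standard equivalence, but the execution after that point has to be replaced, not just polished.
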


If $\lambda$ is a cardinal, then let $B(\lambda)$ denotes a metric space $(D(\lambda))^\omega$, where $D(\lambda)$ is a discrete space of cardinality $\lambda$, (see e. g. \cite{FK}).
\\
In \cite{FK} it is proved the following result, (\cite[Theorem 2.1]{FK}).

\begin{thm}[\cite{FK}]
	Assume that $J$ is an $\omega_1$-complete ultrafilter on $\kappa$. Then $B(2^\kappa)$ has a Kuratowski partition of cardinality $\kappa$.
\end{thm}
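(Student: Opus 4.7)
The plan is to identify $D(2^\kappa)$ with $\mathcal{P}(\kappa)$ (both of cardinality $2^\kappa$), so that a point $x \in B(2^\kappa)$ becomes a sequence $(A_n)_{n<\omega}$ of subsets of $\kappa$, and then to use $J$ (which we may assume non-principal) to read off from $x$ a single ordinal $\pi(x) \in \kappa$ labelling its class. Concretely, set $X_n := A_n$ if $A_n \in J$ and $X_n := \kappa \setminus A_n$ otherwise; each $X_n$ lies in $J$, so by the $\omega_1$-completeness of $J$ the set $B(x) := \bigcap_n X_n$ belongs to $J$ and is, in particular, nonempty. Put $\pi(x) := \min B(x)$ and $F_\alpha := \pi^{-1}(\alpha)$ for $\alpha < \kappa$; the $F_\alpha$ then form a partition of $B(2^\kappa)$ indexed (after a short surjectivity check) by $\kappa$.

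Next I would verify that each $F_\alpha$ is nowhere dense, hence meager. Given a basic clopen set $[s] = \{x : x_i = s_i,\ i<k\}$, I extend $s$ by the coordinate $A_k := \{\alpha\}$. Since $J$ is non-principal, $\{\alpha\} \notin J$, so $X_k = \kappa \setminus \{\alpha\}$ for every $x \in [s\frown(\{\alpha\})]$; hence $\alpha \notin B(x)$ and $\pi(x) \neq \alpha$. Thus $[s\frown(\{\alpha\})]$ is a nonempty open subset of $[s]$ disjoint from $F_\alpha$.

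The heart of the proof is to show that $\bigcup_{\alpha \in I} F_\alpha = \pi^{-1}(I)$ has the Baire property for every $I \subseteq \kappa$. The strategy is to establish the sharper dichotomy that $\pi^{-1}(I)$ is nowhere dense when $I \notin J$ and comeager when $I \in J$. For $I \notin J$ I refine the previous step: set $C_s := \bigcap_{i<k} X_i$ (which lies in $J$ as a finite intersection of $J$-sets) and take $A_k := \kappa \setminus (C_s \cap I)$. Since $I \notin J$ forces $C_s \cap I \notin J$, the ultrafilter property gives $A_k \in J$; then for any $x \in [s\frown(A_k)]$ we have $B(x) \subseteq C_s \cap A_k = C_s \setminus I$ and so $\pi(x) \notin I$. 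The case $I \in J$ follows by applying the first case to $\kappa \setminus I$ and taking complements. The main obstacle lies precisely in this step: the extending coordinate $A_k$ must simultaneously be a $J$-set and clip $C_s$ down so as to avoid $I$, and it is exactly the combined force of $\omega_1$-completeness (which gives $C_s \in J$) and the ultrafilter property (which gives $A_k \in J$) that makes this choice possible.
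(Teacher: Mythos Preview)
Your argument is correct and is essentially the construction from \cite{FK} that the paper invokes (and sketches again inside the proof of Theorem~3.4). The only cosmetic difference is the labelling of the discrete factor: the paper identifies $D(2^\kappa)$ with the ultrafilter $J$ itself, so that a point $x\in J^\omega$ already has all coordinates in $J$ and one sets $F_\alpha=\{x:\alpha=\min\bigcap_n x(n)\}$ directly; you identify $D(2^\kappa)$ with $\mathcal P(\kappa)$ and then ``flip'' each coordinate into $J$. The map $A\mapsto X$ you use is precisely a retraction of $\mathcal P(\kappa)$ onto $J$, so the two presentations are equivalent, and your dichotomy (nowhere dense for $I\notin J$, comeager for $I\in J$) is exactly the mechanism behind the Baire-property claim in \cite{FK}.

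One small caution: your parenthetical ``after a short surjectivity check'' is not as short as it sounds. The range of $\pi$ is $\{\min A: A\in J\}$, and if $J$ is merely $\omega_1$-complete it may concentrate on a proper initial segment (e.g.\ if $J$ is induced from a measure on some measurable $\mu<\kappa$), in which case many $F_\alpha$ are empty and the partition does not literally have $\kappa$ pieces. This is a feature of the theorem's hypotheses rather than a flaw in your construction; under the natural reading (or once one reduces to a uniform $\kappa$-complete $J$, where $\kappa\setminus\alpha\in J$ for every $\alpha<\kappa$) surjectivity is immediate.
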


Let $I^+ = \mathcal{P}(\kappa) \setminus I$.
Consider a set
$$X(I) = \{x \in (I^+)^{\omega} \colon \bigcap\{x(n) \colon n \in \omega\} \not = \emptyset \textrm{ and } \forall_{n\in \omega} \bigcap\{x(m) \colon m < n\} \in I^+\}.$$
As was pointed out in \cite{FK} the set $X(I)$ is considered as a subset of a complete metric space $(I^+)^\omega$, where $I^+$ is equipped with the discrete topology. 
In \cite{FK}, the following facts were proved, (see \cite[Proposition 3.1 and Theorem 3.2]{FK}).

\begin{prop}[\cite{FK}]
$X(I)$ is a Baire space iff $I$ is a precipitous ideal.
\end{prop}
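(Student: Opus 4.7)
The plan is to prove both directions by converting between density properties on $X(I)$ and the decreasing-chain-through-partitions description of precipitousness.

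For the forward direction, suppose $X(I)$ is Baire and let $I$-partitions $W_0 \ge W_1 \ge \cdots$ of a positive set $S$ be given; by passing to an open subspace of $X(I)$ one may assume $S = \kappa$. Set $D_n = \{x \in X(I) : \exists k\, \exists A \in W_n,\ x(k) \subseteq A\}$. Each $D_n$ is manifestly open, and it is dense: given a basic open $[s] \cap X(I) \ne \emptyset$, maximality of $W_n$ supplies some $A \in W_n$ with $A \cap \bigcap s \in I^+$, and appending $A \cap \bigcap s$ as the next coordinate of $s$ produces a nonempty basic open set contained in $D_n$. By Baire-ness, pick $x \in \bigcap_n D_n$ with witnesses $k_n$ and $A_n \in W_n$, $x(k_n) \subseteq A_n$. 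For $m \le n$ let $B_n^m$ denote the unique block of $W_m$ containing $A_n$. The crucial observation is that for $n, n' \ge m$, the intersection $B_n^m \cap B_{n'}^m$ contains $x(k_n) \cap x(k_{n'})$, which in turn contains the $I^+$ set $\bigcap_{k \le \max(k_n,k_{n'})} x(k)$; since $W_m$ is an $I$-antichain this forces $B_n^m = B_{n'}^m =: \hat A_m$. Then $\hat A_0 \supseteq \hat A_1 \supseteq \cdots$, and any $\alpha \in \bigcap_k x(k)$ belongs to $\hat A_m$ for every $m$, so $\bigcap_m \hat A_m \ne \emptyset$.

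For the reverse direction, suppose $I$ is precipitous, fix dense open sets $U_n \subseteq X(I)$ and a basic open $V = [s_0] \cap X(I) \ne \emptyset$, and set $B = \bigcap s_0$. The plan is to construct recursively $I$-partitions $W_0 \ge W_1 \ge \cdots$ of $B$, together with witnesses $t^A \supseteq s_0$ for each $A \in W_n$ satisfying $A = \bigcap t^A$ and $[t^A] \cap X(I) \subseteq U_n$, arranged so that $A \subseteq A' \in W_{n-1}$ implies $t^A \supseteq t^{A'}$. The key combinatorial lemma is a covering statement: for any finite sequence $t$ with $\bigcap t \in I^+$, the sets $\bigcap t'$ with $t' \supseteq t$ and $[t'] \cap X(I) \subseteq U_n$ cover $\bigcap t$ modulo $I$, since for any $I^+$ subset $C \subseteq \bigcap t$, density of $U_n$ applied to the basic open set obtained by appending $C$ to $t$ produces a $t'$ with $\bigcap t' \subseteq C$. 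Taking a maximal almost-disjoint subfamily via Zorn yields an $I$-partition of $\bigcap t$, and iterating below each block of $W_{n-1}$ defines $W_n$. Precipitousness applied to $B$ and the chain $W_0 \ge W_1 \ge \cdots$ then yields decreasing $A_n \in W_n$ with $\bigcap_n A_n \ne \emptyset$; the associated sequences $t^{A_n}$ form an increasing chain whose union $x$ lies in $V \cap \bigcap_n U_n$, and $x \in X(I)$ because its finite intersections are the $I^+$ sets $A_n$ and any $\alpha \in \bigcap_n A_n$ belongs to $\bigcap_k x(k)$.

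The main obstacle will be the non-uniqueness of block membership for $I$-partitions: since distinct blocks of $W_m$ only intersect in $I$-sets rather than being disjoint, a point of $\kappa$ can belong to several blocks of a single $W_m$, so in the forward direction one cannot naively select $\hat A_m$ as \emph{the} block containing a prescribed $\alpha$. The idea that resolves this is to exploit the built-in $I^+$-positivity of finite intersections of coordinates of $x \in X(I)$ to force the refinement-ancestors $B_n^m$ to collapse to a unique $\hat A_m$. In the reverse direction, the technical heart is the covering lemma, which is precisely what aligns density in $X(I)$ with the partition-refinement format demanded by precipitousness.
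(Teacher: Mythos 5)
The paper gives no proof of this proposition at all: it is quoted verbatim from Frankiewicz and Kunen \cite{FK} (their Proposition 3.1), so there is no in-paper argument to compare yours with. On its own merits, your two-directional argument is correct and is essentially the standard proof of this equivalence: the forward direction via the dense open sets $D_n$ whose generic point's coordinates pin down a unique decreasing chain through the $W_n$ (your observation that the positivity of the finite intersections $\bigcap_{m<n}x(m)$ forces the ancestors $B^m_n$ to coincide is exactly the right point, and in fact it shows the witnesses $A_m$ themselves are already decreasing), and the reverse direction via the covering lemma that turns density of $U_n$ into a refining sequence of $I$-partitions below a basic open set.

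Two small points should be tightened. First, in the forward direction the sets $D_n$ are \emph{not} dense in all of $X(I)$ when $\kappa\setminus S\in I^+$ (a basic $[s]$ with $\bigcap s\cap S\in I$ cannot be extended into $D_n$), so ``assume $S=\kappa$'' must be read as: work inside the nonempty open, hence Baire, subspace $[\langle S\rangle]\cap X(I)$, where your appending argument does give relative density; the chain extracted from a point of that subspace still runs through the original $W_n$, so nothing else changes. Second, in the reverse direction you need the witnessing sequences to \emph{properly} extend at each stage, otherwise the union of the $t^{A_n}$ could be a finite sequence and not a point of $X(I)$; your proof of the covering lemma already produces $t'\supseteq t^{\frown}C$, hence of length at least $|t|+1$, so it suffices to build that requirement into the lemma's statement (or, alternatively, note that if the witnesses stabilize at some $t^{*}$ then any point of $[t^{*}]\cap X(I)$ already lies in $V\cap\bigcap_n U_n$). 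With these touches the proof is complete.
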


\begin{thm}[\cite{FK}]
Let $I$ a precipitous ideal on some regular cardinal. Then there is a Kuratowski partition of the metric Baire space $X(I)$. 
\end{thm}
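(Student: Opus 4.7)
The plan is to take the partition $\mathcal{F} = \{F_\alpha : \alpha < \kappa\}$ where $F_\alpha = f^{-1}(\{\alpha\})$ for the canonical map $f \colon X(I) \to \kappa$ given by $f(x) = \min \bigcap_{n<\omega} x(n)$, which is well-defined since $x \in X(I)$ forces this intersection to be nonempty. Precipitousness of $I$ enters only via Proposition 2.3, which guarantees that $X(I)$ is Baire; the remainder of the argument rests solely on $I$ being a $\kappa$-complete proper ideal (so that singletons lie in $I$).

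First I would fix notation for the basic open sets: they have the form $[s] = \{x \in X(I) : s \subseteq x\}$ for $s \in (I^+)^{<\omega}$, and writing $\operatorname{Int}(s) = \bigcap_{i<|s|} s(i)$ one has $[s] \neq \emptyset$ whenever $\operatorname{Int}(s) \in I^+$, since extending $s$ by repeating $\operatorname{Int}(s)$ in all later coordinates yields a legitimate point of $X(I)$. With this in hand, I would check that each $F_\alpha$ is nowhere dense: for a given $[s]$, either $\alpha \notin \operatorname{Int}(s)$ (and we are done), or $\operatorname{Int}(s) \setminus \{\alpha\} \in I^+$ (as singletons belong to $I$), and appending this set to $s$ produces a nonempty basic open $[t] \subseteq [s]$ whose points all omit $\alpha$ in coordinate $|s|$, hence miss $F_\alpha$.

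Next I would handle the Baire property for each union $\bigcup_{\alpha \in A} F_\alpha = f^{-1}(A)$ by introducing the open set
$$U_A = \bigcup \{[s] : \operatorname{Int}(s) \subseteq A\}.$$
Clearly $U_A \subseteq f^{-1}(A)$, since any $x \in [s]$ with $\operatorname{Int}(s) \subseteq A$ satisfies $f(x) \in \bigcap_n x(n) \subseteq \operatorname{Int}(s)$. To control the symmetric difference I would show that $U_A \cup U_{\kappa \setminus A}$ is dense: given any $[s]$, at least one of $\operatorname{Int}(s) \cap A$ or $\operatorname{Int}(s) \setminus A$ lies in $I^+$, and appending that piece to $s$ places us inside the corresponding $U$. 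The complement of this dense open set is then closed nowhere dense and contains $f^{-1}(A) \setminus U_A$, so $f^{-1}(A) \bigtriangleup U_A$ is meager, giving the Baire property.

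The main obstacle I anticipate is purely organisational: making sure that every finite sequence constructed in the extensions above really lies in $(I^+)^{<\omega}$ with $\operatorname{Int}(\cdot) \in I^+$, so that each basic open produced is genuinely nonempty as a subset of $X(I)$ and not just of the ambient Polish space $(I^+)^\omega$. No deeper ingredient is needed beyond the ideal axioms; the substantive choices are the partition via the function $f$ and the open witnesses $U_A$.
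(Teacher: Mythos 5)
Your proof is correct, and it is essentially the same construction as in the cited source [FK] (the paper itself only quotes this as Theorem 3.2 of [FK] without proof): the partition $F_\alpha=\{x: \alpha=\min\bigcap_n x(n)\}$ is exactly the one used there and reused in the paper's proof of Theorem 3.4, with precipitousness entering only through Proposition 2.3 to make $X(I)$ Baire. Your verification of nowhere density and of the Baire property via the open witnesses $U_A$ and the dense open set $U_A\cup U_{\kappa\setminus A}$ is complete and sound.
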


Let $\tau(X)$ be a topology, (i. e. the family of open sets) on a set $X$.
\\
A family $\mathcal{B} \subset \tau(X)$ is a base of $X$ if each $U \in \tau(X)$ is a union of some members of $\mathcal{B}$. The \textit{weight} of $X$  is denoted as follows
$$w(X) = \min \{|\mathcal{B}| \colon \mathcal{B} \textrm{ a base of } X\}+\omega.$$
\noindent
A family $\mathcal{V} \subset \tau(X)\setminus \{\emptyset\}$  is a $\pi$-base of $X$ if for each nonempty open set $U\in \tau(X)$ there is a set $V \in \mathcal{V}$ with $V \subset U$.  
 The $\pi$\textit{-weight} of $X$ is defined as follows
$$\pi w(X) = \min \{|\mathcal{V}| \colon \mathcal{V} \textrm{ a $\pi$-base of }X\}+\omega.$$
(For more information see e. g. \cite[p. 10 and 14]{RH} or \cite[p. 5]{JI}).
\\
Note that if $X$ is an infinite metrizable space then $w(X) = \pi w(X)$, (see \cite[Theorem 8.1, p. 32-33]{RH}).

A space $X$ is a \textit{\v{C}ech complete space} if $X$ is a dense $G_\delta$ subset of a compact space, (see \cite[p. 252]{RE}).
Each \v{C}ech complete space is  a Baire space. 

\begin{thm}[\cite{EFK1}]
Let $X$ be a \v{C}ech complete space such that $\pi w(X) \leq 2^\omega$.  Then a Kuratowski partition of $X$ does not exists.
\end{thm}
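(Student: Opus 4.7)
Suppose for contradiction that $\mathcal{F} = \{F_\alpha : \alpha < \kappa\}$ is a Kuratowski partition of $X$, and fix a $\pi$-base $\mathcal{V}$ of $X$ with $|\mathcal{V}| \leq 2^\omega$. The plan is to embed the quotient Boolean algebra $\mathcal{P}(\kappa)/I_\mathcal{F}$ into the regular-open algebra $RO(X)$, and then to use the $\pi$-weight bound together with \v{C}ech completeness to squeeze out a contradiction.

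For each $A \subseteq \kappa$ the union $F_A := \bigcup_{\alpha \in A} F_\alpha$ has the Baire property, so it differs from a unique regular open set $O(A) \in RO(X)$ by a meager set. Because $X$ is \v{C}ech complete, hence Baire, countable unions of meager sets remain meager, so the assignment $A \mapsto O(A)$ descends to a $\sigma$-complete Boolean embedding
\[
e \colon \mathcal{P}(\kappa)/I_\mathcal{F} \hookrightarrow RO(X).
\]
Since $\mathcal{V}$ is a $\pi$-base of size at most $2^\omega$, the cellularity $c(X)$ is bounded by $2^\omega$, so every antichain in the image of $e$, and hence every family of pairwise disjoint $I_\mathcal{F}$-positive subsets of $\kappa$, has cardinality at most $2^\omega$. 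Together with the $\sigma$-completeness of $I_\mathcal{F}$ and the fact that every singleton lies in $I_\mathcal{F}$, this exhibits $I_\mathcal{F}$ as a $(2^\omega)^+$-saturated, $\sigma$-complete, non-principal ideal on $\kappa$.

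To close the argument, I would fix a complete sequence of open covers $\{\mathcal{U}_n\}_{n < \omega}$ witnessing \v{C}ech completeness, refined so that each $\mathcal{U}_n \subseteq \mathcal{V}$. Using $e$ and the cellularity bound, each $\mathcal{U}_n$ pulls back through $e^{-1}$ to a partition of $\kappa$ modulo $I_\mathcal{F}$ of cardinality at most $2^\omega$ into $I_\mathcal{F}$-positive pieces, with each refinement $\mathcal{U}_{n+1} \preceq \mathcal{U}_n$ inducing a refinement of the corresponding partition. Selecting compatible branches through the refinement tree yields a decreasing sequence $(A_n)_{n < \omega}$ of $I_\mathcal{F}$-positive sets whose images $(O(A_n))$ in $RO(X)$ shrink to a single point of $X$ by \v{C}ech completeness. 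That point lies in a unique $F_\alpha$, and unravelling the $e$-preimages forces $\bigcap_n A_n$ to coincide, modulo $I_\mathcal{F}$, with $\{\alpha\}$; but $\{\alpha\} \in I_\mathcal{F}$, a contradiction.

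The hard part will be the last paragraph: one must verify that some compatible branch through the refinement tree can genuinely be traversed so that the pulled-back chain $(A_n)$ remains descending in the $I_\mathcal{F}^+$-sense and so that its limiting behaviour is faithfully captured by the topological convergence inside $X$. This is essentially a precipitousness-type statement for $I_\mathcal{F}$, which has to be extracted from the interplay between the $\sigma$-completeness of $I_\mathcal{F}$, the $(2^\omega)^+$-saturation obtained from $\pi w(X) \leq 2^\omega$, and the Baire-category properties inherited from \v{C}ech completeness. In essence, \v{C}ech completeness produces precipitousness-like refinability that collides with the topological coarseness imposed by $\pi w(X) \leq 2^\omega$.
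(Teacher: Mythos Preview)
The paper does not contain a proof of this statement; it is quoted as Theorem~2.5 with attribution to \cite{EFK1} and then used as a black box in the proof of Proposition~3.1. There is therefore no in-paper argument to compare your sketch against.

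On your sketch itself: the first two paragraphs (the embedding $e\colon\mathcal{P}(\kappa)/I_{\mathcal{F}}\hookrightarrow RO(X)$ and the $(2^\omega)^+$-saturation of $I_{\mathcal{F}}$ coming from the $\pi$-weight bound) are correct and standard. The final paragraph, however, does not close, and the gaps are more serious than you acknowledge. First, ``selecting compatible branches through the refinement tree'' is literally the assertion that $I_{\mathcal{F}}$ is precipitous, and $(2^\omega)^+$-saturation together with mere $\sigma$-completeness does not yield that; the usual saturation-implies-precipitousness theorem requires $\kappa$-completeness for a $\kappa^+$-saturated ideal on $\kappa$, which is not what you have here. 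Second, \v{C}ech completeness only guarantees that a suitably nested sequence of open sets has \emph{nonempty} intersection, not that the intersection is a single point; in non-metrizable \v{C}ech complete spaces the limit can be large. Third, even if a point $p\in F_\alpha$ appeared in $\bigcap_n O(A_n)$, nothing forces $\bigcap_n A_n$ to reduce to $\{\alpha\}$ modulo $I_{\mathcal{F}}$: other indices $\beta$ can lie in every $A_n$ while $F_\beta$ misses $p$, so no contradiction is reached. The strategy of extracting an abstract precipitousness-type property and colliding it with a limit point seems unlikely to recover the theorem; the original argument in \cite{EFK1} proceeds by a direct Baire-category construction against the $\pi$-base rather than through the quotient algebra.
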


For a given metric space $Y$, by $\tilde{Y}$ we denote its completion (in the sense of \cite[Theorem 4.3.19, p. 340]{RE}).
\\

Other notations of this paper are standard  for this area and can be found in \cite{TJ} (infinite combinatorics), \cite{RE} and \cite{KK} (topology).

\section{Main results} 
   
\begin{prop}
	If $2^\omega = 2^{\omega_1}$, 
	then there exists a metric Baire space with a Kuratowski partition for which a completion does not have a Kuratowski partition.
\end{prop}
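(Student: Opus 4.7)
The plan is to use the Frankiewicz--Kunen construction directly. I would fix a precipitous ideal $I$ on $\omega_1$, which is the setting in which the question of Kuratowski partitions is genuinely interesting. Then by Proposition 2.3 the space $X(I)$ is a Baire space, and by Theorem 2.4 it carries a Kuratowski partition, so the candidate is $X = X(I)$. Its completion $\widetilde{X}$ can be identified with the closure of $X(I)$ in the complete metric space $(I^+)^\omega$, and hence is itself complete, in particular \v{C}ech complete.

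The next step is to bound the weight of $\widetilde{X}$ via the cardinal hypothesis. Since $I$ lives on $\omega_1$ we have $I^+ \subseteq \mathcal{P}(\omega_1)$, giving $|I^+| \leq 2^{\omega_1}$, and the assumption $2^\omega = 2^{\omega_1}$ yields $|I^+| \leq 2^\omega$. Consequently $|(I^+)^\omega| \leq (2^\omega)^\omega = 2^\omega$, so the metric space $(I^+)^\omega$ has weight at most $2^\omega$, and the same bound descends to the metric subspace $\widetilde{X}$. Because $\widetilde{X}$ is metrizable, $\pi w(\widetilde{X}) = w(\widetilde{X}) \leq 2^\omega$.

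With \v{C}ech completeness and $\pi$-weight bounded by $2^\omega$ in hand, Theorem 2.5 applies and says that $\widetilde{X}$ admits no Kuratowski partition. Combined with the Kuratowski partition on $X = X(I)$ provided by Theorem 2.4, this yields the desired example. Note that the strict inclusion $X \subsetneq \widetilde{X}$ comes out for free: if $X$ were already complete then Theorem 2.5 would contradict Theorem 2.4, so the argument is internally consistent.

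The main delicate point is the implicit requirement that a precipitous ideal on $\omega_1$ actually exist, which is a consistency hypothesis beyond $2^\omega = 2^{\omega_1}$ alone. I would either invoke it explicitly as an added assumption or note that Proposition 3.1 is meaningful precisely in models where Kuratowski partitions exist at all, which is equiconsistent with the existence of a measurable cardinal by Frankiewicz--Kunen.
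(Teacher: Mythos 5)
Your proposal is correct and follows essentially the same route as the paper: take $X(I)$ for a precipitous ideal $I$ on $\omega_1$, use Proposition 2.3 and Theorem 2.4 to get a Baire space with a Kuratowski partition, bound $\pi w(\tilde{X}(I))$ by $2^\omega$ via $2^\omega=2^{\omega_1}$, and conclude from Theorem 2.5 that the completion has none. You are in fact slightly more explicit than the paper on the weight bound (via $|I^+|\le 2^{\omega_1}$) and on the implicit hypothesis that a precipitous ideal on $\omega_1$ exists, which the paper also assumes without comment.
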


\begin{proof}
	Let $I$ be a precipitous ideal on $\omega_1$. Let $X(I)$ be as defined in Section 2. By Proposition 2.3 $X(I)$ is a Baire space. 
	By Theorem 2.4. this space has a Kuratowski partition. 
	Consider $\tilde{X}(I)$. By \cite[Theorem 4.3.19, p. 340]{RE},  we have that $w(X(I)) = w(\tilde{X}(I))$. By \cite[Theorem 8.1, p. 32-33]{RH} we have that 
	$w(\tilde{X}(I)) = \pi w(\tilde{X}(I))$. 
	Using $2^\omega = 2^{\omega_1}$ we see that
	$|\pi w (\tilde{X}(I))| \leq 2^\omega$.
	By Theorem 2.5 the partition $\mathcal{F}$ is not a Kuratowski partition of $\tilde{X}(I)$. 
\end{proof}	
\noindent
Notice that in Proposition 3.1 we need not assume that $X$ is a metric space because as has been shown in  \cite[Lemma 5 and Lemma 6]{FS} if  a Kuratowski partition exists for a Hausdorff Baire space, then there also exists for a metric space.

\begin{prop}
	Let $\kappa$ be a regular cardinal.
	Let $X$ be a space with a Kuratowski partition of cardinality $\kappa$ and let $\Pi$ be a family of all permutations of $\kappa$. Then the direct sum   $\oplus_{\pi \in \Pi} X_\pi$ has a Kuratowski partition.
\end{prop}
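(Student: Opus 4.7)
The plan is to build a Kuratowski partition of $Y=\bigoplus_{\pi\in\Pi}X_\pi$ by taking, in each copy $X_\pi$, the original Kuratowski partition $\mathcal{F}=\{F_\alpha:\alpha<\kappa\}$ of $X$, indexed as $\mathcal{F}^\pi=\{F_\alpha^\pi:\alpha<\kappa\}$, and then twisting the indices by the permutations. Concretely, for each $\alpha<\kappa$ I would put
$$G_\alpha=\bigcup_{\pi\in\Pi}F_{\pi(\alpha)}^\pi,$$
and then show that $\{G_\alpha:\alpha<\kappa\}$ is a Kuratowski partition of $Y$. Because each $\pi$ is a bijection of $\kappa$, as $\alpha$ runs through $\kappa$ the index $\pi(\alpha)$ runs through $\kappa$ as well, so $G_\alpha\cap X_\pi=F_{\pi(\alpha)}^\pi$ and the $G_\alpha$ genuinely partition $Y$.

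Before checking the Kuratowski properties, I would record the topological sum lemma that drives everything: because each $X_\pi$ is clopen in $Y$, a set $N\subseteq Y$ is nowhere dense in $Y$ if and only if $N\cap X_\pi$ is nowhere dense in $X_\pi$ for every $\pi\in\Pi$; meager sets decompose the same way; and the Baire property is preserved in both directions. The one place that requires care is the "meager" direction when $|\Pi|$ is large: given $M\cap X_\pi=\bigcup_n N_n^\pi$ with $N_n^\pi$ nowhere dense in $X_\pi$, I would assemble $N_n=\bigcup_{\pi\in\Pi}N_n^\pi$ and verify that $N_n$ is nowhere dense in $Y$ (its closure, taken slice-wise, has empty interior in every $X_\pi$), so $M=\bigcup_n N_n$ is meager via a \emph{countable} union of nowhere dense sets, even though $\Pi$ may have size $2^\kappa$.

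With this lemma in hand, the three verifications are short. Each $G_\alpha$ is meager in $Y$ because $G_\alpha\cap X_\pi=F_{\pi(\alpha)}^\pi$ is meager in $X_\pi$. For any $A\subseteq\kappa$,
$$\Bigl(\bigcup_{\alpha\in A}G_\alpha\Bigr)\cap X_\pi=\bigcup_{\alpha\in A}F_{\pi(\alpha)}^\pi=\bigcup_{\beta\in\pi(A)}F_\beta^\pi,$$
which has the Baire property in $X_\pi$ since $\mathcal{F}^\pi$ is a Kuratowski partition of $X_\pi$; hence $\bigcup_{\alpha\in A}G_\alpha$ has the Baire property in $Y$ by the sum lemma.

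The main obstacle, and really the only nontrivial point, is the slice-wise characterization of meagerness when $\Pi$ is uncountable: one must resist the temptation to index the nowhere dense pieces by pairs $(n,\pi)$ (which would produce an uncountable family) and instead glue across $\Pi$ for each fixed $n$. Once that is settled, the permutation indexing is used only to guarantee that $\{G_\alpha:\alpha<\kappa\}$ is a partition; any family of bijections of $\kappa$ would work, and the full symmetric group is chosen here for later use.
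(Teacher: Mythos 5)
Your construction is exactly the paper's: your twisted blocks $G_\alpha=\bigcup_{\pi\in\Pi}F^\pi_{\pi(\alpha)}$ coincide with the paper's sets $F^*(\alpha)$, and your verification is correct. The only difference is one of detail, not of route: where you prove the slice-wise (clopen summand) lemma for nowhere dense, meager and Baire-property sets by hand, the paper disposes of the verification by citing Kuratowski's Union Theorem (localization), so the two arguments are essentially the same.
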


\begin{proof}
	Let $\mathcal{F} = \{F_\alpha \colon \alpha < \kappa\}$ be a Kuratowski partition of $X$.
	Consider the set $\Pi$ of all permutations of $\kappa$. Let $\{X_\pi \colon \pi \in \Pi\}$ be a set of spaces homeomorphic to $X$ indexed by elements of $\Pi$.
	Consider the direct sum $\oplus_{\pi \in \Pi} X_\pi$. Of course  each $X_\pi$ is open in $\oplus_{\pi \in \Pi} X_\pi$. For each $\pi \in \Pi$ let $\mathcal{F}_\pi$ be a partition of $X_\pi$ such that 
	$$\mathcal{F}_\pi = \{F_{\pi(\alpha)} \colon \exists_{\beta < \alpha}\  \beta = \pi(\alpha) \textrm{ and }F_\beta \in \mathcal{F}\}.$$ 
	Such a family is a Kuratowski partition of $X_\pi$ for all $\pi \in \Pi$.
	For each $\alpha < \kappa$ consider
	$$F^*(\alpha) = \bigcup\{F_{\pi(\alpha)} \colon \pi \in \Pi\}.$$
	Notice that $\mathcal{F}^* = \{F^*(\alpha) \colon \alpha < \kappa\}$ is a Kuratowski partition of  $\oplus_{\pi \in \Pi} X_\pi$. If not, then there exists a subfamily $\mathcal{F}^{*}_{0} \subseteq \mathcal{F}^*$ has not the Baire property. Then by \cite[Union Theorem, p. 82]{KK1} we have that at least one of the elements of $\mathcal{F}^*_{0}$ is not meager. A contradiction.   
\end{proof}

With the Kuratowski partition $\mathcal{F}^*$ of the direct sum $\oplus_{\pi \in \Pi} X_\pi$ of copies of $X$  considered in the proof of Proposition 3.2 we may assiociate a $K$-ideal $I_{\mathcal{F}^*}$. This $K-$ideal can be very small.
  
\begin{thm}
Let $\kappa$ be a regular cardinal.
Let $X$ be a space with a Kuratowski partition $\mathcal{F}$ of cardinality $\kappa$. Then a $K$-ideal $I_{\mathcal{F}^*}$ is a Fr\'echet ideal.	
\end{thm}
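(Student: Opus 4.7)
The plan is to reduce the question about $I_{\mathcal{F}^*}$ to one about $I_\mathcal{F}$ under permutations of $\kappa$. Within each summand $X_\pi$ of $\bigoplus_{\pi \in \Pi} X_\pi$, the trace $F^*(\alpha) \cap X_\pi$ equals $F_{\pi(\alpha)}$, and since every summand is clopen, a set $S \subseteq \bigoplus_{\pi \in \Pi} X_\pi$ is meager in the sum if and only if $S \cap X_\pi$ is meager in $X_\pi$ for each $\pi \in \Pi$. Applied to $S = \bigcup_{\alpha \in A} F^*(\alpha)$ and using $\bigcup_{\alpha \in A} F_{\pi(\alpha)} = \bigcup_{\beta \in \pi[A]} F_\beta$, this yields the key equivalence
\[
A \in I_{\mathcal{F}^*} \iff \pi[A] \in I_\mathcal{F} \text{ for every } \pi \in \Pi.
\]

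Given this characterization, the inclusion $\{A \subseteq \kappa : |A| < \kappa\} \subseteq I_{\mathcal{F}^*}$ is immediate: since each $F_\alpha$ is meager we have $\{\alpha\} \in I_\mathcal{F}$, and by $\kappa$-completeness every subset of $\kappa$ of cardinality $<\kappa$ lies in $I_\mathcal{F}$; as permutations preserve cardinality, $|A|<\kappa$ gives $\pi[A] \in I_\mathcal{F}$ for all $\pi \in \Pi$, hence $A \in I_{\mathcal{F}^*}$.

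For the reverse inclusion, I would assume $|A|=\kappa$ and produce $\pi \in \Pi$ with $\pi[A] \notin I_\mathcal{F}$. First note $\kappa \notin I_\mathcal{F}$ because the Baire space $X$ is not meager in itself. If $|\kappa \setminus A| < \kappa$ then $\kappa \setminus A \in I_\mathcal{F}$, so $A = \kappa \setminus (\kappa \setminus A) \notin I_\mathcal{F}$ and $\pi = \mathrm{id}$ suffices. If $|\kappa \setminus A| = \kappa$ and $A \notin I_\mathcal{F}$, again take $\pi = \mathrm{id}$; otherwise $A \in I_\mathcal{F}$ forces $\kappa \setminus A \notin I_\mathcal{F}$, and because $|A| = |\kappa \setminus A| = \kappa$ there exists a permutation $\pi$ of $\kappa$ with $\pi[A] = \kappa \setminus A$, so $\pi[A] \notin I_\mathcal{F}$ as required.

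I expect the transfer of meagerness through the direct sum to be routine. The main subtlety is the last case of the second direction, which forces the case split on $|\kappa \setminus A|$ and makes essential use of the existence of a permutation exchanging $A$ with its complement when both have size $\kappa$; this is also the step that explicitly needs $X$ to be non-meager in itself. Combining both inclusions identifies $I_{\mathcal{F}^*}$ with the Fr\'echet ideal on $\kappa$.
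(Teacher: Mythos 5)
Your proposal is correct and takes essentially the same route as the paper: both read membership of $A$ in $I_{\mathcal{F}^*}$ summand-by-summand as ``$\pi[A]\in I_{\mathcal{F}}$ for every permutation $\pi$'' and then refute $|A|=\kappa$ by producing a permutation carrying $A$ onto a set whose union is nonmeager, which is exactly the contradiction the paper's terse proof invokes. You are in fact somewhat more complete than the paper, since you also verify the inclusion of the sets of size $<\kappa$ (using the $\kappa$-completeness of $I_{\mathcal{F}}$ that the paper asserts in Section 2) and make explicit the case analysis and the tacit assumption that $X$ is nonmeager in itself, both of which the paper leaves unstated.
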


\begin{proof}
	Let $\mathcal{F} = \{F_\alpha \colon \alpha < \kappa\}$ be a Kuratowski partition of $X$ and let $I_{\mathcal{F}^*}$ be a $K$-ideal. Suppose that there exists $A \in I_{\mathcal{F}^*}$ of cardinality $\kappa$. Then there exists a permutation $\pi$ of $\kappa$ and a set $B \subseteq \kappa$ of cardinality $\kappa$ such that $\pi(A) = B$ and $\bigcup_{\alpha \in B} F_{\pi(\alpha)}$ is nonmeager. Hence by \cite[Union Theorem, p. 82]{KK1} one of the elements in $\{F_{\pi(\alpha)} \colon \alpha \in B\}$ is nonmeager. A contradiction. 
\end{proof}

By results in \cite{FK} one can suppose that large cardinals and $K$-ideals sre strongly related, but comparing Proposition 3.3 and Lemma 2.1 one can conclude that such a $K$-ideal can be almost arbitrary, not necessary precipitous.

By \cite[Theorem 3.3 and Theorem 3.4]{FK}, the existence of a Kuratowski partition on an arbitrary space is equiconsistent with the existence of a measurable cardinal. 
In the next theorem it  will be shown that  reducing an ideal up to the Fr\'echet ideal can  be obtained by enlarging the space, as a direct sum. The idea of getting a measurable cardinal is a result of so called localization property, (see \cite{B} and \cite{FK}). Summarize we have a following theorem.

\begin{thm}
	Let $\kappa$ be a measurable cardinal. Then each $\kappa$-complete ideal $I$ on $\kappa$ can be represented by some $K$-ideal, (i.e. for each $\kappa$-complete ideal $I$ on $\kappa$ there exists a space with a Kuratowski partition $\mathcal{F}$ of cardinality $\kappa$ such that $I$ is of the form $I_\mathcal{F}$). 
\end{thm}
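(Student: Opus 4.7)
The strategy is to write $I$ as an intersection of duals of $\kappa$-complete ultrafilters on $\kappa$, pull a Kuratowski partition from each such ultrafilter via Theorem~2.2, and splice them into a single Kuratowski partition by a direct sum, as in Proposition~3.2. For a $\kappa$-complete ultrafilter $U$ on $\kappa$, write $U^{\star} = \{A \subseteq \kappa : \kappa \setminus A \in U\}$ for its dual ideal.

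I would begin by invoking the measurability of $\kappa$ to decompose $I$. The key point is that on a measurable cardinal every proper $\kappa$-complete filter on $\kappa$ extends to a $\kappa$-complete ultrafilter (a standard consequence of measurability; see e.g.\ \cite{TJ}). Consequently, for each $A \in I^{+}$ the proper $\kappa$-complete filter generated by $\{\kappa \setminus B : B \in I\} \cup \{A\}$ extends to a $\kappa$-complete ultrafilter $U_{A}$ with $A \in U_{A}$ and $I \subseteq U_{A}^{\star}$. Letting $\mathcal{U}$ denote the family of all $\kappa$-complete ultrafilters $U$ on $\kappa$ with $I \subseteq U^{\star}$, this yields $I = \bigcap_{U \in \mathcal{U}} U^{\star}$.

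Next, for each $U \in \mathcal{U}$, since $U$ is in particular $\omega_1$-complete, Theorem~2.2 provides a Kuratowski partition $\mathcal{F}^{U} = \{F^{U}_{\alpha} : \alpha < \kappa\}$ of the metric space $B(2^{\kappa})$; by inspecting the construction of \cite{FK} one can arrange the $K$-ideal of $\mathcal{F}^{U}$ to be exactly $U^{\star}$. Denote the corresponding copy of $B(2^{\kappa})$ by $X_{U}$, form the topological direct sum $Y = \bigoplus_{U \in \mathcal{U}} X_{U}$, and set $G_{\alpha} = \bigcup_{U \in \mathcal{U}} F^{U}_{\alpha}$ for $\alpha < \kappa$. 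By the argument of Proposition~3.2, $\mathcal{G} = \{G_{\alpha} : \alpha < \kappa\}$ is a Kuratowski partition of $Y$: each $G_{\alpha}$ is meager in $Y$ because its trace on every $X_{U}$ is meager, and $\bigcup_{\alpha \in A} G_{\alpha}$ has the Baire property in $Y$ since it does so on each $X_{U}$ (both meagerness and the Baire property in a topological direct sum are checked component by component). Finally, $A \in I_{\mathcal{G}}$ iff $\bigcup_{\alpha \in A} F^{U}_{\alpha}$ is meager in $X_{U}$ for every $U \in \mathcal{U}$, iff $A \in U^{\star}$ for every $U \in \mathcal{U}$, iff $A \in I$.

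The delicate step is the decomposition of $I$: proving that every proper $\kappa$-complete filter on a measurable $\kappa$ extends to a $\kappa$-complete ultrafilter. Zorn's lemma does not apply verbatim, since unions of increasing chains of $\kappa$-complete filters of cofinality $<\kappa$ need not be $\kappa$-complete; one needs either a transfinite construction that closes the intermediate filter under $<\kappa$-intersections at each limit stage (with properness preserved), or a direct appeal to a standard reference. A secondary issue is the refinement of Theorem~2.2 used above: one must verify that the construction of \cite{FK} can be tuned so that the resulting $K$-ideal is precisely $U^{\star}$, which requires looking inside the construction rather than only its statement.
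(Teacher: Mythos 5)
Your argument stands or falls with the claim that on a measurable cardinal every proper $\kappa$-complete filter on $\kappa$ extends to a $\kappa$-complete ultrafilter, which you use to write $I=\bigcap_{U\in\mathcal U}U^{\star}$. This is not a standard consequence of measurability, and the difficulty you flag is not a technicality about Zorn's lemma that a careful transfinite construction or a reference could repair: the extension property you need is essentially the assertion that $\kappa$ is ``$\kappa$-compact'' (a local form of strong compactness), a strictly stronger large-cardinal hypothesis. Measurability does give extensions of $\kappa$-complete filters generated by at most $\kappa$ sets (via a fine $\kappa$-complete ultrafilter on $P_{\kappa}(\kappa)$), but the dual filter of an arbitrary $\kappa$-complete ideal on $\kappa$ may require $2^{\kappa}$ generators. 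In fact the decomposition can consistently fail outright: in $L[U]$ every $\kappa$-complete ultrafilter on $\kappa$ is isomorphic to a finite power of $U$ (Kunen), so there are at most $2^{\kappa}$ of them, while a $\kappa$-independent family of size $2^{\kappa}$ (available since $\kappa$ is inaccessible) produces $2^{2^{\kappa}}$ proper $\kappa$-complete filters on $\kappa$ no two of which have a common ultrafilter extension; hence some proper $\kappa$-complete ideal on $\kappa$ is not an intersection of duals of $\kappa$-complete ultrafilters, and your family $\mathcal U$ can even be empty for it. The later steps of your plan are essentially sound --- the Frankiewicz--Kunen partition of $J^{\omega}\cong B(2^{\kappa})$ does have $K$-ideal exactly $U^{\star}$, and meagerness and the Baire property are checked componentwise in a direct sum --- but as written your proof only covers those ideals that happen to be intersections of duals of $\kappa$-complete ultrafilters, not every $\kappa$-complete ideal on $\kappa$.

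This is also where you diverge from the paper, which never decomposes $I$: it starts from a single $\kappa$-complete nonprincipal ultrafilter $J$ (which measurability does provide), takes the partition $F_{\alpha}=\{x\in J^{\omega}\colon \alpha=\min\bigcap\{P_{x(n)}\colon n\in\omega\}\}$, whose $K$-ideal is the dual maximal ideal, and then adjusts this $K$-ideal toward the prescribed $I$ by enlarging the space with direct sums of permuted copies of the partition, in the spirit of Proposition 3.2 and Theorem 3.3 (where adding all permuted copies collapses the $K$-ideal to the Fr\'echet ideal). If you want to keep your route, you must either strengthen the hypothesis to the above extension property or replace the ultrafilter decomposition by a space-enlarging argument of this kind that shrinks a maximal $K$-ideal to the given $I$.
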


\begin{proof}
	Since $\kappa$ is measurable there exists a $\kappa$-complete nonprincipal ultrafilter on $\kappa$
	$$J= \{P_\xi \colon \xi \in 2^\kappa\}.$$
	Consider a space $J^\omega$, where $J$ is equipped with a discrete topology.
	Since $J$ is $\kappa$-complete, $\min \bigcap \{P_{x(n)} \colon n \in \omega\}$ is nonempty.
	Define 
	$$F_\alpha = \{x \in J^\omega \colon \alpha = \min \bigcap \{P_{x(n)} \colon n \in \omega\}\}.$$
	Then the family $\mathcal{F} = \{F_\alpha \colon \alpha < \kappa\}$ is a Kuratowski partition, (see the proof of \cite[Theorem 2.1]{FK} for details).	
	Let 
	$$I = \{B_\xi \colon B_\xi = \kappa \setminus P_\xi \textrm{ for some } P_\xi \in J, \xi < 2^\kappa \}.$$
	Obviously $I$ is a $\kappa$-complete maximal ideal.
	Now we enlarge the space $J^\omega$ by some its copies, (i.e. spaces homeomorphic to $J^\omega$) by excluding from $I$ all subsets $B$ of cardinality $\kappa$ for which  $\bigcup_{\alpha \in B} F_\alpha$ is nonmeager.
	To do this we proceed as follows: for each set $B \in I$ of cardinality $\kappa$ such that $\bigcup_{\alpha \in B} F_\alpha$ is nonmeager take a permutation $\pi_B$ of $\kappa$ such that $\bigcup_{\beta \in \pi_B (B)} F_\beta$ is meager.
	For simplifying the notation take 
	$A_B = \bigcup_{\alpha \in B} F_\alpha$ and 
	$A_{\pi_B} = \bigcup_{\beta \in \pi_B (B)} F_\beta$. Let 
	$$\Pi(I) = \{\pi_B \in \kappa ! \colon \exists_{B \in I} |B| = \kappa\  (\textrm{if } A_B \textrm{ is meager, then } A_{\pi_B} \textrm{ is nonmeager)}\}.$$
	Consider $\oplus_{\pi_B \in \Pi(I)} A_{\pi_B}$.
	Since each $A_{\pi_B}$  is meager then also $\oplus_{\pi_B \in \Pi(I)} A_{\pi_B}$ is meager.
	Then the ideal 
	$$I' = \{B \in I \colon A_B \textrm{ is meager or } \oplus_{\pi_B \in \Pi(I)} A_{\pi_B} \textrm{ is meager}\}$$
	is a required $K$-ideal.
\end{proof} 

\noindent
\begin{xrem} 
	If $\kappa$ is nonmeasurable but there exists a Kuratowski partition of cardinality $\kappa$ of a space $X$ then one can obtain each $\kappa$-complete ideal $K$ such that $J \subseteq K \subseteq I$, where $J$ is a Fr\'echet ideal and $I$ is an ideal from Theorem 3.4. 
	Thus, the existence of a Kuratowski partition leads to the statement that there exists a precipitous ideal, (see \cite{FJ}).
\end{xrem}
\noindent
\textbf{Acknowledgements.} We are very grateful for Reviewers and Editors who had the big influence of the final version of this paper.

\end{document}